\newtheorem{theorem}{Theorem}
\newtheorem{proposition}{Proposition}
\newtheorem{remark}{Remark}
\title[]{The Circumbilliard: Any Triangle\\can be a 3-Periodic}
\author{Dan Reznik}
\address{Dan Reznik,
Data Science Consulting,\\
Rio de Janeiro, RJ, Brazil}
\email{dan@dat-sci.com}
\author{Ronaldo Garcia}
\address{Ronaldo Garcia,
Inst. de Matemática e Estatística,\\
Univ. Federal de Goiás,\\
Goiânia, GO, Brazil}
\email{ragarcia@ufg.br}
\begin{document}
\maketitle
\begin{abstract}
A Circumconic passes through a triangle's vertices. We define the Circumbilliard, a circumellipse to a generic triangle for which the latter is a 3-periodic. We study its properties and associated loci.

\vskip .3cm
\noindent\textbf{Keywords} elliptic billiard, periodic trajectories, triangle center, circumconic, circumellipse, circumhyperbola, conservation, invariance, invariant.
\vskip .3cm
\noindent \textbf{MSC} {51M04 \and 37D50  \and 51N20 \and 51N35\and 68T20}
\end{abstract}

\section{Introduction}
\label{sec:intro}
Given a triangle, a {\em circumconic} passes through its three vertices and satifies two additional constraints, e.g., center or perspector\footnote{Where reference and polar triangles are perspective \cite{mw}.}. We study properties and invariants of such conics derived from a 1d family of triangles: 3-periodics in an Elliptic Billiard (EB): these are triangles whose bisectors coincide with normals to the boundary (bounces are elastic), see Figure~\ref{fig:three-orbits-proof}.

\begin{figure}
    \centering
    \includegraphics[width=.66\textwidth]{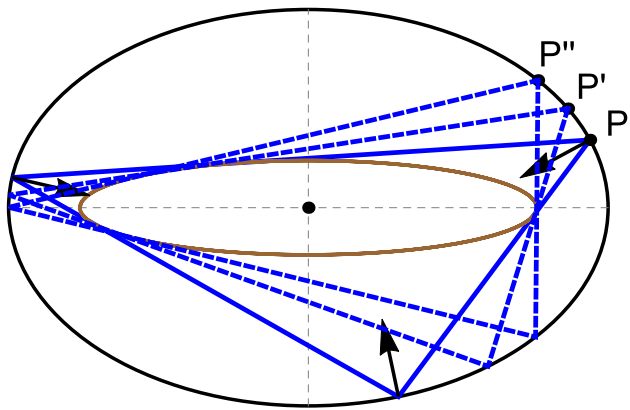}
    \caption{3-periodics (blue) in the Elliptic Billiard (EB, black): normals to the the boundary at vertices (black arrows) are bisectors. The family is constant-perimeter and envelopes a confocal Caustic (brown). This family conserves the ratio inradius-to-circumradius and has a stationary Mittenpunkt at the EB center. \textbf{Video}: \cite[PL\#01]{reznik2020-playlist-circum}.}
    \label{fig:three-orbits-proof}
\end{figure}

Amongst all planar curves, the EB is uniquely integrable \cite{kaloshin2018}. It can be regarded as a special case of Poncelet's Porism \cite{dragovic11}. These two propeties imply two classic invariances: $N$-periodics have constant perimeter and envelop a confocal Caustic. The seminal work is \cite{sergei91} and more recent treatments include \cite{lynch2019-billiards,rozikov2018}. 

We have shown 3-periodics also conserve the Inradius-to-Circumradius\footnote{As does the Poristic Family \cite{gallatly1914-geometry}.} ratio which implies an invariant sum of cosines, and that their {\em Mittenpunkt}\footnote{Where lines drawn from each Excenter thru sides' midpoints meet.} is stationary at the EB center \cite{reznik2020-intelligencer}. Indeed many such invariants have been effectively generalized for $N>3$ \cite{akopyan2020-invariants,bialy2020-invariants}.

We have also studied the loci of 3-periodic Triangle Centers over the family: out of the first 100 listed in \cite{etc}, 29 sweep out ellipses (a remarkable fact on its own) with the remainder sweeping out higher-order curves \cite{garcia2020-ellipses}. Related is the study of  loci described by the Triangle Centers of the Poristic Triangle family \cite{odehnal2011-poristic}.

\textbf{Summary of the paper}: given a generic triangle $T$ we define its {\em Circumbilliard} CB: a Circumellipse to $T$ for which the latter is a 3-periodic. We then analyze the dynamic of geometry of Circumbilliards for triangles derived from the 3-periodic family such as the Excentral, Anticomplementary, Medial, and Orthic, as well as the loci swept by their centers. Additional results include:

\begin{itemize}
\item Proposition~\ref{prop:right-triangle} in Section~\ref{sec:cb_derived} describes regions of the EB which produce acute, right-triangle, and obtuse 3-periodics. 
\item Theorem~\ref{thm:poristic} in Section~\ref{sec:cb_derived}: The aspect ratio of Circumbilliards of the Poristic Triangle Family \cite{gallatly1914-geometry} is invariant. This is a family of triangle with fixed Incircle and Circumcircle.
\end{itemize}

A reference table with all Triangle Centers, Lines, and Symbols appears in Appendix~\ref{app:symbols}. Videos of many of the experiments are assembled on Table~\ref{tab:playlist} in Section~\ref{sec:conclusion}.

%\section{Review: Elliptic Billiards}
%\label{sec:billiards}
%\input{002a_review_billiards}
%\newpage

\section{The Circumbilliard}
\label{sec:cb}
Let the boundary of the EB satisfy:

\begin{equation}
\label{eqn:billiard-f}
f(x,y)=\left(\frac{x}{a}\right)^2+\left(\frac{y}{b}\right)^2=1.
\end{equation}

Where $a>b>0$ denote the EB semi-axes, and $c=a^2-b^2$ throughout the paper. Below we use {\em aspect ratio} as the ratio of an ellipse's semi-axes. When referring to Triangle Centers we adopt Kimberling $X_i$ notation \cite{etc}, e.g., $X_1$ for the Incenter, $X_2$ for the Barycenter, etc., see Table~\ref{tab:kimberling} in Appendix~\ref{app:symbols}.

The following five-parameter equation is assumed for all circumconics not passing through $(0,0)$.

\begin{equation}
1 + c_1 x + c_2 y + c_3 x y + c_4 x^2 + c_5 y^2=0
\label{eqn:e0}
\end{equation}

\begin{proposition}
Any triangle $T=(P_1,P_2,P_3)$ is associated with a unique 
ellipse $E_9$ for which $T$ is a billiard 3-periodic. The center of $E_9$ is T's Mittenpunkt.
\end{proposition}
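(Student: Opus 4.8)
The plan is to treat the five coefficients $c_1,\dots,c_5$ of the circumconic \eqref{eqn:e0} as unknowns and to impose exactly five scalar conditions: three forcing the conic through the vertices and two encoding the billiard reflection law. First I would substitute $P_1,P_2,P_3$ into \eqref{eqn:e0}; since \eqref{eqn:e0} is affine in the $c_i$, these three incidence conditions are linear and cut the five-parameter space of conics down to the two-parameter family of circumconics of $T$. It is within this pencil that the billiard conic must be found.

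Next I would express the reflection law. Writing $g(x,y)=1+c_1x+c_2y+c_3xy+c_4x^2+c_5y^2$, the normal to the conic at a vertex $P_i$ is parallel to $\nabla g(P_i)$, and the billiard condition is precisely that this normal bisects the interior angle of $T$ at $P_i$, i.e. that $\nabla g(P_i)$ is parallel to the internal bisector direction $\widehat{u_i}+\widehat{v_i}$, where $u_i=P_j-P_i$ and $v_i=P_k-P_i$ are the two edge vectors. To keep the system polynomial I would clear the normalizations and use the equivalent equal-cosine form $\lvert v_i\rvert^2\,(\nabla g(P_i)\cdot u_i)^2=\lvert u_i\rvert^2\,(\nabla g(P_i)\cdot v_i)^2$, retaining the branch that corresponds to the internal rather than the external bisector. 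Crucially, by the Poncelet closure property of conics the reflection law need only be imposed at two of the three vertices, the third being automatic; this supplies the two remaining conditions and makes the full system zero-dimensional.

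I would then solve the combined system (three linear incidence equations plus two reflection equations) for $c_1,\dots,c_5$, show that for any nondegenerate triangle it has a single real solution whose quadratic part $c_4x^2+c_3xy+c_5y^2$ is positive definite (equivalently $4c_4c_5-c_3^2>0$ with the correct sign), so that the solution is a genuine ellipse $E_9$; this establishes existence and uniqueness. Finally, to locate the center I would solve $g_x=g_y=0$, obtaining the center as a rational function of the $c_i$ and hence of the vertices, and verify that its barycentric coordinates reduce to $\big(a(b+c-a):b(c+a-b):c(a+b-c)\big)$, which are those of the Mittenpunkt $X_9$, where $a,b,c$ are the side lengths of $T$.

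The main obstacle is the reflection step: because billiard reflection depends on Euclidean angles it is not affine invariant, so the tempting shortcut of mapping $T$ to an equilateral triangle --- whose circumbilliard is simply its circumcircle, centered at $X_9$ --- and transporting the result by an affine map is unavailable. One is therefore committed to the Euclidean computation above, whose difficulty is the appearance of the edge lengths $\lvert u_i\rvert,\lvert v_i\rvert$; squaring to remove them can introduce the external-bisector, non-billiard branch, so care is needed to select the correct root and to confirm definiteness. I would manage the algebra by first placing $T$ in a convenient normalized frame and then verifying the Mittenpunkt identity against the standard barycentric formula, with computer-algebra assistance for the final elimination.
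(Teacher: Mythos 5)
Your proposal is correct in outline but takes a genuinely different route from the paper. The paper never imposes the reflection law algebraically: it invokes its earlier result \cite{reznik2020-intelligencer} that the Mittenpunkt of the Poncelet family of 3-periodics is stationary at the billiard's center, which immediately forces the center of any candidate ellipse to be $X_9(T)$; uniqueness then reduces to the fact that a circumconic with prescribed center is cut out by a $5\times 5$ \emph{linear} system (Appendix~\ref{app:circum-linear}), with ellipticity checked via the discriminant. You instead impose the reflection law directly at two vertices and argue the third is automatic. That reduction is true, but its correct justification is the confocal-caustic lemma (a chord and its billiard reflection at a boundary point touch the same confocal conic, a chord touches only one conic of the confocal family, and two tangents to a confocal conic drawn from a point of the ellipse make equal angles with it); Poncelet closure is a consequence of this lemma rather than the tool itself, so you should cite or prove the lemma, not ``closure.'' What your route buys is self-containedness: it \emph{derives} that the center is $X_9$ by solving $g_x=g_y=0$ and matching barycentrics, whereas the paper cites its prior theorem; moreover, existence of the reflection property is built into your construction, while the paper's proof as written only verifies that the $X_9$-centered circumconic is an ellipse, leaving the reflection property resting on the cited earlier work. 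The cost is the nonlinear elimination: after squaring, your system is \emph{not} single-solution as literally stated---the external-bisector branches contribute genuine extra solutions (corresponding to non-billiard circumconics)---so uniqueness holds only after the branch selection you flag, and that selection together with the positive-definiteness check $4c_4c_5-c_3^2>0$ is where the real algebraic labor sits, in contrast to the paper's purely linear computation. With the caustic lemma properly invoked and the branch analysis carried out, your plan closes.
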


\begin{proof}
If $T$ is a 3-periodic of $E_9$, by Poncelet's Porism, $T$ is but an element of a 1d family of 3-periodics, all sharing the same confocal Caustic\footnote{This turns out to be the Mandart Inellipse $I_9$ of the family \cite{mw}.}. This family will share a common Mittenpunkt $X_9$ located at the center of $E_9$  \cite{reznik2020-intelligencer}. Appendix~\ref{app:circum-linear} shows how to obtain the parameters for \eqref{eqn:e0} such that it passes through $P_1,P_2,P_3$ and is centered on $X_9$: this yields a $5{\times}5$ linear system. Solving it its is obtained a quadratic equation with positive discriminant, hence the conic is an ellipse.
\end{proof}

$E_9$ is called the Circumbilliard (CB) of $T$. Figure~\ref{fig:circumbilliard} shows examples of CBs for two sample triangles.

\begin{figure}[H]
    \centering
    \includegraphics[width=.8\textwidth]{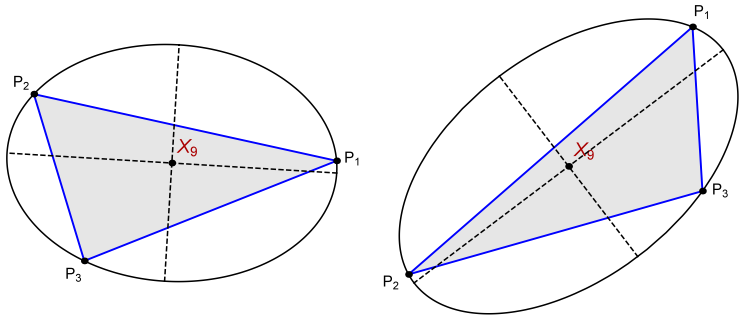}
    \caption{Two random triangles are shown as well as their Circumbilliards (CBs). Notice their axes in general are not horizontal/vertical. An algorithm for computing the CB is given in Appendix~\ref{app:circum-linear}. \textbf{Video:} \cite[PL\#02]{reznik2020-playlist-circum}}
    \label{fig:circumbilliard}
\end{figure}

\section{Circumbilliards of Derived Triangles}
\label{sec:cb_derived}
Figure~\ref{fig:cb_trio} shows CBs for the Excentral, Anticomplementary (ACT), and Medial Triangles, derived from 3-periodics.

\begin{figure}
    \centering
    \includegraphics[width=\textwidth]{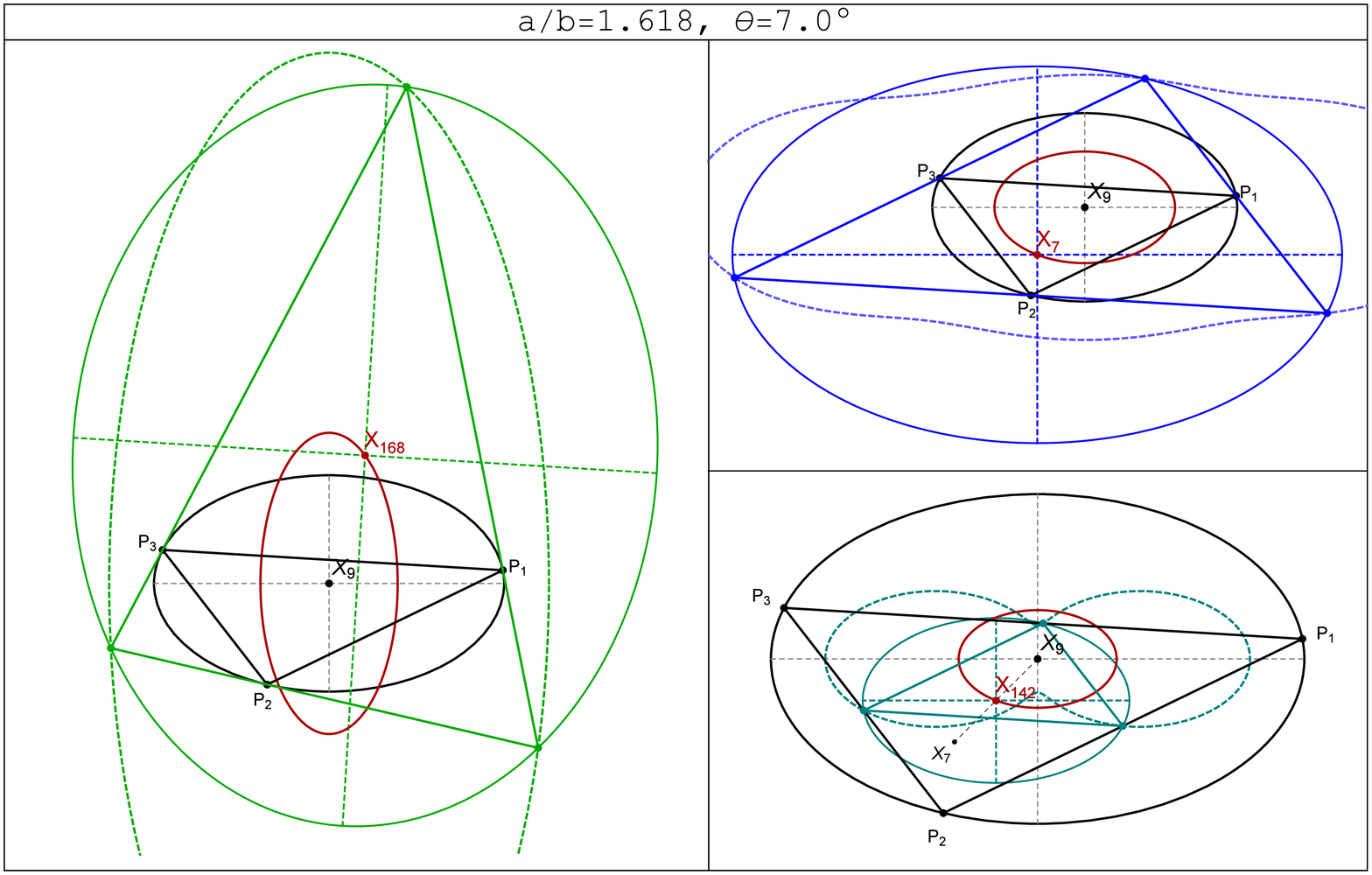}
    \caption{Draw in black in each picture is an $a/b=\varphi{\simeq}1.618$ EB and a 3-periodic at $t=7.0^\circ$. \textbf{Left}: the CB of the Excentral Triangle (solid green) centered on the latter's Mittenpunkt is $X_{168}$ \cite{etc}. Its locus (red) is non-elliptic. Also shown (dashed green) is the elliptic locus of the Excenters (the MacBeath Circumellipse $E_6'$ of the Excentrals \cite{mw}), whose center is $X_9$ \cite{garcia2020-ellipses}; \textbf{Top Right}: the CB of the Anticomplementary Triangle (ACT) (blue), axis-aligned with the EB. Its center is the Gergonne Point $X_7$, whose locus (red) is elliptic and similar to the EB \cite{garcia2020-ellipses}. The locus of the ACT vertices is not elliptic (dashed blue); \textbf{Bottom Right}: the CB of the Medial Triangle (teal), also axis-aligned with the EB, is centered on $X_{142}$, whose locus (red) is also elliptic and similar to the EB, since it is the midpoint of $X_9X_7$ \cite{etc}. The locus of the medial vertices is a dumb-bell shaped curve (dashed teal). \textbf{Video:} \cite[PL\#03]{reznik2020-playlist-circum}}
    \label{fig:cb_trio}
\end{figure}

\subsection{Excentral Triangle}
\label{sec:cb_exc}
The locus of the Excenters is shown in Figure~\ref{fig:cb_trio} (left). It is an ellipse similar to the $90^\circ$-rotated locus of $X_1$ and its axes $a_e,b_e$ are given by \cite{garcia2019-incenter,garcia2020-ellipses}:

\begin{equation*}
 a_e=\frac{{b}^{2}+\delta}{a},\;\;\; 
 b_e=\frac{{a}^{2}+\delta}{b}
\end{equation*}

Where $\delta=\sqrt{a^4-a^2 b^2+b^4}$. 

\begin{proposition}
The locus of the Excenters the stationary MacBeath Circumellipse $E_6'$ \cite{mw} of the Excentral Triangles.
\end{proposition}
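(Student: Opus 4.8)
The plan is to show that the single fixed ellipse traced by the Excenters is, for every member of the family, the circumconic of the corresponding Excentral Triangle $T_e$ centered at its Symmedian point, which is exactly the MacBeath Circumellipse $E_6'$.

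First I would invoke the locus computation recalled just above: as the 3-periodic sweeps the Poncelet family, the Excenters fill a single ellipse $\mathcal{E}$ with semi-axes $a_e,b_e$ centered at the EB center \cite{garcia2019-incenter,garcia2020-ellipses}. Since each Excenter of each orbit is a point of this locus, the three Excenters of any one orbit lie \emph{simultaneously} on $\mathcal{E}$. As the vertices of $T_e$ are precisely these three Excenters, $\mathcal{E}$ is a circumconic of $T_e$; moreover it is the \emph{same} fixed conic for every $T_e$ in the family, which already establishes the stationarity claimed in the statement.

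Next I would single out $\mathcal{E}$ among all circumconics of $T_e$ by its center. A conic through three fixed points is a two-parameter net, and prescribing the center imposes two further linear conditions, so a circumconic is uniquely determined by its center. The center of $\mathcal{E}$ is the EB center, which is the stationary Mittenpunkt $X_9$ of the reference orbit \cite{reznik2020-intelligencer}. Using the classical identity that the Mittenpunkt of a triangle is the Symmedian point of its Excentral Triangle, i.e. $X_9(T)=X_6(T_e)$ \cite{etc}, the center of $\mathcal{E}$ is the Symmedian point of $T_e$. Since the MacBeath Circumellipse $E_6'$ of $T_e$ is the circumconic of $T_e$ centered at $X_6(T_e)$ \cite{mw}, uniqueness forces $\mathcal{E}=E_6'$. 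The ellipse character is guaranteed by the explicit positive axes $a_e,b_e$; alternatively one may substitute the orbit parametrization of the locus result, form the Excenters and the five-parameter circumconic \eqref{eqn:e0} of $T_e$ constrained to be centered at $X_6(T_e)$, and check in a CAS that its coefficients are proportional to those of $\mathcal{E}$.

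The hard part will be the center identification: confirming rigorously that the EB center coincides with the Symmedian point of \emph{every} Excentral Triangle in the family (equivalently $X_9(T)=X_6(T_e)$), together with the attribution that the circumconic of $T_e$ centered at $X_6(T_e)$ is indeed the MacBeath Circumellipse. Once this center coincidence is in hand, the remaining steps follow from the cited locus result and the uniqueness of a circumconic with prescribed center.
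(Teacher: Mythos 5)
Your proposal is correct and follows essentially the same route as the paper's proof: both rest on the facts that the Excenters sweep a fixed ellipse centered at the stationary Mittenpunkt $X_9$ of the reference, that $X_6$ of the Excentral Triangle coincides with $X_9$ of the reference, and that the MacBeath Circumellipse is the circumconic centered at $X_6$. The only difference is that you make explicit the uniqueness of a circumconic with prescribed center, a step the paper leaves implicit, which is a reasonable bit of added rigor rather than a new approach.
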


\begin{proof}
The center of $E_6'$ is the Symmedian Point $X_6$ \cite[MacBeath Circumconic]{mw}. The Excentral Triangle's $X_6$ coincides with the Mittenpunkt $X_9$ of the reference \cite{etc}. Since over the 3-periodics the vertices of the Excentral lie on an ellipse and its center is stationary, the result follows.
\end{proof}

\begin{proposition}
The Excentral CB is centered on $X_{168}$, whose trilinears are irrational, and whose locus is non-elliptic.
\end{proposition}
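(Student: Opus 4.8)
The plan is to separate the statement into its three assertions --- center location, irrationality of the trilinears, and non-ellipticity of the locus --- dispatching the first two by reduction to known facts and reserving the real work for the third. For the center, I would invoke the first Proposition of Section~\ref{sec:cb}: the CB of any triangle is centered on that triangle's Mittenpunkt. Applied to the Excentral Triangle $T_e$, this says its CB is centered on the Mittenpunkt of $T_e$, so it only remains to name this point in the reference triangle's nomenclature. I would write the Excentral vertices in reference trilinears ($-1{:}1{:}1$, $1{:}{-1}{:}1$, $1{:}1{:}{-1}$), compute the Excentral side lengths (equivalently, use its angles $\pi/2 - A/2$, etc.), apply the Mittenpunkt trilinear formula $(s{-}a){:}(s{-}b){:}(s{-}c)$ \emph{internally} to $T_e$, and convert back to reference coordinates; the result is then matched against the tabulated trilinears of $X_{168}$ in \cite{etc}, which closes the identification.

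Irrationality is a by-product of the same computation. Because the side lengths and half-sum quantities of $T_e$ carry half-angle factors of the reference (of the form $\cos(A/2)$, $\sin(A/2)$, themselves square roots of rational expressions in $a,b,c$), the trilinears of $X_{168}$ are radical, not rational, functions of $a,b,c$. I would record this against \cite{etc}: unlike the rational centers $X_7$ and $X_{142}$, whose loci were shown elliptic in \cite{garcia2020-ellipses}, $X_{168}$ is not a rational function of the vertices, and its irrationality is the heuristic signal that its locus need not be a conic.

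The substance is the non-ellipticity. I would fix the EB~\eqref{eqn:billiard-f}, parametrize the 3-periodic family by $P_1=(a\cos t,\,b\sin t)$, and obtain $P_2(t),P_3(t)$ from the explicit billiard/Poncelet formulas of \cite{garcia2020-ellipses}. From these I compute the side lengths and then the coordinates $(x(t),y(t))$ of $X_{168}$ through its trilinear expression. To prove the traced curve is not an ellipse, I would show it is contained in no conic at all: since five points in general position determine a conic, it suffices to evaluate $(x(t),y(t))$ at six parameter values and verify that the $6\times 6$ determinant with rows $(x_i^2,\,x_iy_i,\,y_i^2,\,x_i,\,y_i,\,1)$ --- the condition for six points to lie on a common conic --- does not vanish. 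Equivalently, one may eliminate $t$ and exhibit that the implicit equation of the locus is an irreducible polynomial of degree exceeding two.

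The main obstacle is the symbolic handling of the radicals in this last step: the trilinears of $X_{168}$ carry square roots of functions of $t$, so a clean implicit equation requires rationalizing by repeated squaring, which inflates the degree and risks introducing spurious components. The six-point test sidesteps most of this difficulty, since at well-chosen sample values of $t$ the radicals evaluate to concrete numbers and the determinant reduces to a finite, verifiable computation; the only care needed is to avoid the isolated symmetric positions (e.g.\ where the 3-periodic is isoceles) at which $X_{168}$ degenerates onto an axis and the samples could accidentally be conconic.
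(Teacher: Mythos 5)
Your decomposition matches the paper's: the center and irrationality claims are reduced to the identification of the Excentral CB's center with the Mittenpunkt of the Excentral Triangle (the paper simply cites \cite{etc} for the facts that this point is $X_{168}$ and that its trilinears are irrational, where you propose to recompute and match against the tables), and the real work is the non-ellipticity of the locus. Where you genuinely differ is in the certificate for that last claim. The paper uses the algebro-numeric technique of \cite{garcia2020-ellipses}: sample $X_{168}$ over the 3-periodic family, least-squares fit a \emph{zero-centered, axis-aligned} ellipse, and observe a nonzero residual; this implicitly exploits the symmetry of any triangle-center locus of this family about both EB axes (an elliptic locus would be forced into that restricted form), and its rigor is that of a numerical fit. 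Your six-point determinant test is a different and in one respect stronger certificate: a nonvanishing $6\times 6$ conic determinant shows the sampled points lie on \emph{no} conic whatsoever, not merely on no axis-aligned central ellipse, so it needs no symmetry argument, and it can in principle be made exact --- e.g.\ by a rational parametrization of $P_1$ so that the matrix entries are explicit algebraic numbers whose determinant can be certified nonzero. The costs are heavier symbolic computation and the one-sided nature of the test (if the determinant happens to vanish for your sample you must re-sample, as you note); your fallback of eliminating $t$ to exhibit an irreducible implicit equation of degree exceeding two carries exactly the spurious-component risk you identify, which is why the paper avoids elimination altogether. Both routes are sound; yours trades the paper's reliance on a fitting tolerance for an exact algebraic verification, at the price of working directly with the nested radicals in the $X_{168}$ coordinates.
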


\begin{proof}
$X_{168}$ is the Mittenpunkt of the Excentral Triangle \cite{etc} and its trilinears are irrational\footnote{No Triangle Center whose trilinears are irrational on sidelengths has yet been found whose locus under the 3-periodic family is an ellipse \cite{garcia2020-ellipses}.} on the sidelengths. To determine if its locus is an ellipse we use the algebro-numeric techniques described in \cite{garcia2020-ellipses}. Namely, a least-squares fit of a zero-centered, axis-aligned ellipse to a sample of $X_{168}$ positions of the 3-periodic family produces finite error, therefore it cannot be an ellipse.
\end{proof}

This had been observed in \cite{garcia2020-ellipses} for several irrational centers such as $X_i,i=$13--18, as well as many others. Notice a center may be rational but produce a non-elliptic locus, the emblematic case being $X_6$, whose locus is a convex quartic. Other examples include $X_j,j=$19, 22--27, etc.

\subsection{Anticomplementary Triangle (ACT)}
\label{sec:cb_act}
The ACT is shown in Figure~\ref{fig:cb_trio} (top right). The locus of its vertices is clearly not an ellipse.

The ACT is perspective with the reference triangle (3-periodic) at $X_2$ and all of its triangle centers correspond to the anticomplement\footnote{Anticomplement: a 1:2 reflection about $X_2$.} of corresponding reference ones \cite{mw}. The center of the CB of the ACT is therefore $X_7$, the anticomplement of $X_9$. We have shown the locus of $X_7$ to be an ellipse similar to the EB with axes \cite{garcia2020-ellipses}:

\begin{equation*}
\left(a_7,b_7\right)=k\left(a,b\right),\;\;\text{with: }k=\frac{2\delta - {a}^{2}-{b}^{2}}{c^2}
\end{equation*}

\begin{remark}
The axes of the ACT CB are parallel to the EB and of fixed length.
\end{remark}

This stems from the fact the ACT is homothetic to the 3-periodic.

\subsection{Medial Triangle}
\label{sec:cb_medial}
The locus of its vertices is the dumbbell-shaped curve, which at larger $a/b$ is self-intersecting, and therefore clearly not an ellipse, Figure~\ref{fig:cb_trio} (bottom right).

Like the ACT, the Medial is perspective with the reference triangle (3-periodic) at $X_2$. All of its triangle centers correspond to the complement\footnote{Complement: a 2:1 reflection about $X_2$.} of corresponding reference ones \cite{mw}. The center of the CB of the Medial is therefore $X_{142}$, the complement of $X_9$. This point is known to sit midway between $X_9$ and $X_7$.

\begin{remark}
The locus of $X_{142}$ is an ellipse similar to the EB.
\end{remark}

This stems from the fact $X_9$ is stationary and the locus of $X_7$ is an ellipse similar to the EB (above). Therefore its axes will be given by:

\begin{equation*}
\left(a_{142},b_{142}\right)=(a_7,b_7)/2
\end{equation*}

Stemming from homothety of 3-periodic and its Medial:

\begin{remark}
The axes of the Medial CB are parallel to the EB and of fixed length.
\end{remark}

\subsection{Superposition of ACT and Medial}
\label{sec:cb_super_act_medial}
\begin{proposition}
The Intouchpoints of the ACT (resp. 3-periodic) are on the EB (resp. on the CB of the Medial)
\end{proposition}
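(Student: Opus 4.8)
Both assertions are instances of a single statement: for any triangle $S$ that is a billiard $3$-periodic of its Circumbilliard $\mathrm{CB}(S)$, the intouchpoints of the anticomplementary triangle $\mathrm{ACT}(S)$ lie on $\mathrm{CB}(S)$. Taking $S=T$ (the $3$-periodic) recovers the first half, since $\mathrm{CB}(T)$ is the EB. Taking $S$ to be the Medial triangle of $T$ recovers the second half, since $T$ is the ACT of its Medial and $\mathrm{CB}(S)$ is then the CB of the Medial. So the plan is to prove this one statement; I write $P_1,P_2,P_3$ for the vertices of $S$, let $a_1,a_2,a_3$ (with $a_i$ opposite $P_i$) be its sidelengths and $s$ its semiperimeter, and work in barycentrics relative to $S$.

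The first step is to locate the intouchpoints of $\mathrm{ACT}(S)$. Since $\mathrm{ACT}(S)$ is the image of $S$ under the homothety $h$ of ratio $-2$ about the centroid $X_2$, and $h$ is a similarity, the incircle of $\mathrm{ACT}(S)$ is the $h$-image of the incircle of $S$ and the intouchpoints of $\mathrm{ACT}(S)$ are the $h$-images of the contact triangle of $S$. Starting from the contact point $(0:s-a_3:s-a_2)$ opposite $P_1$ and applying $h$ (ratio $-2$ about $X_2=(1:1:1)$), I expect to obtain the homogeneous barycentrics
\begin{equation*}
J_1=(a_1:a_3-a_2:a_2-a_3)
\end{equation*}
together with its two cyclic images $J_2,J_3$.

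The second step is to show that $P_1,P_2,P_3,J_1,J_2,J_3$ lie on a common conic. Any conic through the vertices is a circumconic $u\,yz+v\,zx+w\,xy=0$, and requiring it to pass through $J_1,J_2,J_3$ gives three linear conditions on $(u:v:w)$. The crux is that these conditions are \emph{dependent}: the determinant of the resulting $3\times3$ homogeneous system vanishes identically in $a_1,a_2,a_3$, so a nontrivial solution exists, and one checks it is $(u,v,w)=(a_1,a_2,a_3)$, giving the circumconic $a_1\,yz+a_2\,zx+a_3\,xy=0$.

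Finally I identify this conic with $\mathrm{CB}(S)$. By the center formula for a circumconic, the centre of $u\,yz+v\,zx+w\,xy=0$ is $\big(u(v+w-u):v(w+u-v):w(u+v-w)\big)$, which for $(u,v,w)=(a_1,a_2,a_3)$ equals $\big(a_1(a_2+a_3-a_1):a_2(a_3+a_1-a_2):a_3(a_1+a_2-a_3)\big)$ --- precisely the Mittenpunkt $X_9$ of $S$. Since the opening Proposition characterises $\mathrm{CB}(S)$ as the unique circumconic of $S$ centred at $X_9$, our conic is $\mathrm{CB}(S)$, so $J_1,J_2,J_3$ lie on it, which proves the general statement and hence both halves. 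The main obstacle is this second step --- confirming that the three incidence conditions collapse to two (the vanishing determinant) and that the resulting centre is exactly $X_9$; these are routine but must be carried out with care, and the whole reduction leans on the similarity-invariance of the Circumbilliard used to transport the incircle under $h$.
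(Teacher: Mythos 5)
Your proof is correct. It shares with the paper the one structural idea needed for the second half: since the 3-periodic is the ACT of its Medial, the claim about the Medial's CB is just the first claim applied with the Medial as reference triangle --- the paper's proof says exactly this. Where you genuinely diverge is the first half: the paper does not prove it, but imports it as Theorem 2 of \cite{reznik2020-ballet}, whereas you prove the unified statement from scratch. Your computation checks out: the anticomplement of the contact point $(0:s-a_3:s-a_2)$ is indeed $(a_1:a_3-a_2:a_2-a_3)$, and substituting into $a_1\,yz+a_2\,zx+a_3\,xy=0$ gives $a_1(a_2-a_3)\bigl[-(a_2-a_3)+(a_2-a_3)\bigr]=0$, so all three anticomplemented touchpoints lie on the circumconic with perspector $X_1$, whose center $\bigl(a_1(a_2+a_3-a_1):\cdots\bigr)$ is $X_9$. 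The trade-off: your route is self-contained (it even reproves the cited theorem, making the whole proposition independent of the companion paper) at the cost of a coordinate computation, while the paper's is two lines long but rests on an external reference. One small point you should make explicit: identifying your conic with $\mathrm{CB}(S)$ requires uniqueness of the \emph{circumconic} of $S$ centered at $X_9$, which is really what the $5\times 5$ linear system of Appendix~\ref{app:circum-linear} delivers; the opening Proposition as literally stated asserts uniqueness of the ellipse having $T$ as a billiard 3-periodic, which is adjacent to, but not verbatim, the uniqueness you invoke.
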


\begin{proof}
The first part was proved in \cite[Thm. 2]{reznik2020-ballet}. Because the 3-periodic can be regarded as the ACT of the Medial, the result follows.
\end{proof}

This phenomenon is shown in Figure~\ref{fig:cb_act_med_superposed}. Also shown is the fact that $X_i,i=$7, 142, 2, 9, 144 are all collinear and their intermediate intervals are related as $3:1:2:6$. In \cite{etc_central_lines} this line is known as $L(X_2,X_7)$ or $\mathcal{L}_{663}$. $X_{144}$ is the perspector of the ACT and its Intouch Triangle (not shown).
%\begin{align*}
%a(b - c)(b + c - a)x &+ \\
%b(c - a)(c + a - b)y &+ \\
%c(a - b)(a + b - c)z &= 0
%\end{align*}
%I.e., it is L(663), since
%$$X(663) = a(b - c)(b + c - a) ::$$
%Passes through: 2 7 9 57 63 142 144 226 307 329 527 553 579 672 894 908 1025 1400 1423 1445 1447 1652 1653 1708 1741 1944 2002 2094 2285 2406

\begin{figure}
    \centering
    \includegraphics[width=\textwidth]{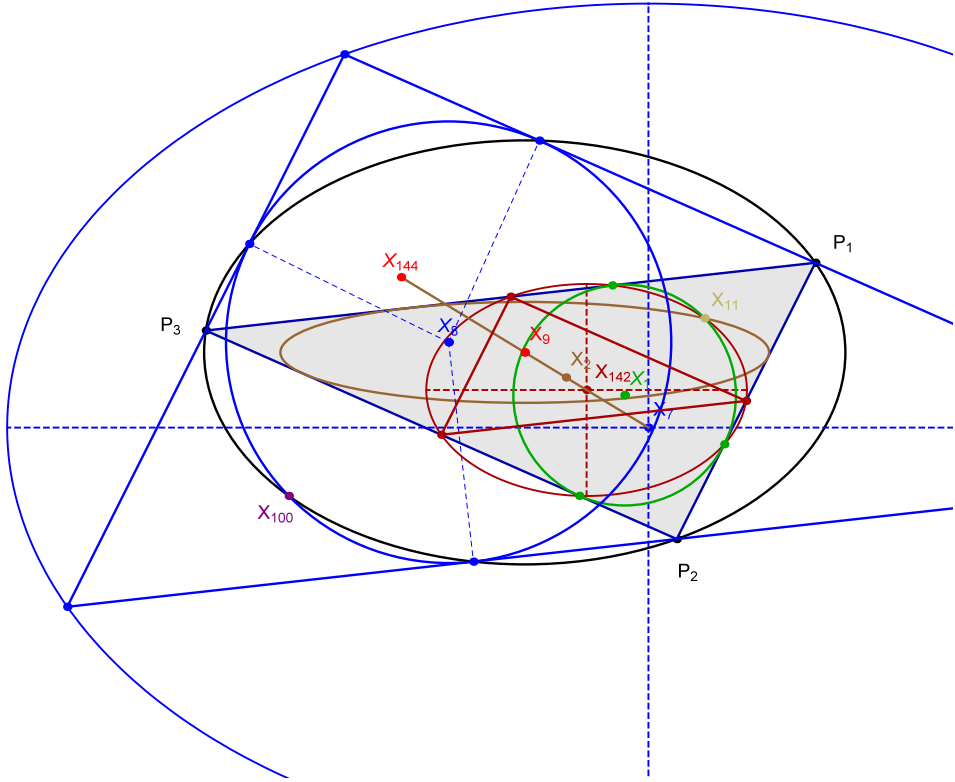}
    \caption{Construction for both ACT and Medial CBs, centered on $X_7$ and $X_{142}$, respectively. The Incircle of the ACT (resp. 3-periodic) is shown blue (resp. green). The former touches the ACT at the EB and the latter touches the 3-periodic sides at the Medial CB. Also shown is line $L(2,7)=L_{663}$ which cointains $X_i$, $i=7,142,2,9,144$. Their consecutive distances are proportional to $3:1:2:6$. $X_{144}$ was included since it is the perspector of the ACT and its Intouch Triangle (not shown) \cite{mw}. \textbf{Video}: \cite[PL\#04,05]{reznik2020-playlist-circum}}
    \label{fig:cb_act_med_superposed}
\end{figure}

\subsection{Orthic Triangle}
\label{sec:cb_ort}
Let $\alpha_4=\sqrt{2\,\sqrt {2}-1}\;{\simeq}\;1.352$. In \cite[Thm. 1]{reznik2020-ballet} we show that if $a/b>\alpha_4$, the 3-periodic family will contain obtuse triangles.

\begin{proposition}
If $a/b>\alpha_4$, the 3-periodic is a right triangle when one of its vertices is at four symmetric points $P^\perp_i$, $i=1,2,3,4$ given by $({\pm}x^\perp,{\pm}y^\perp)$ with:

\begin{equation}
x^\perp=\frac{a^2 \sqrt{a^4+3 b^4-4 b^2 \delta }}{c^3},\;\;\;
y^\perp=\frac{b^2 \sqrt{-b^4-3 a^4+4 a^2 \delta }}{c^3}
\label{eqn:perp}
\end{equation}
\label{prop:right-triangle}
\end{proposition}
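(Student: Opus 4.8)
The plan is to exploit the fact that the two sides of a $3$-periodic meeting at a vertex are precisely the two tangent lines drawn from that vertex to the confocal caustic. Hence the interior angle at a vertex equals the angle subtended by the pair of caustic–tangents issuing from it, and this angle is a right angle \emph{if and only if} the vertex lies on the \emph{director} (orthoptic) circle of the caustic, i.e.\ the locus of points from which the two tangents to a conic are orthogonal. For an axis-aligned ellipse with semi-axes $a_c\ge b_c$ that circle is $x^2+y^2=a_c^2+b_c^2$, concentric with the EB. So the right-triangle vertices are exactly the intersections of the EB with this circle, and the whole problem collapses to intersecting two concentric, axis-aligned conics, which explains at once why the answer comes in a symmetric quadruple $(\pm x^\perp,\pm y^\perp)$.

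First I would record the semi-axes of the confocal caustic of the $3$-periodic family. By Poncelet's Porism this caustic is fixed, and a short computation (via the closure condition for $n=3$, or the confocal-caustic formulas implicit in \cite{garcia2019-incenter,garcia2020-ellipses}) gives $a_c=a(\delta-b^2)/(a^2-b^2)$ and $b_c=b(a^2-\delta)/(a^2-b^2)$, which one checks are genuinely confocal since $a_c^2-b_c^2=a^2-b^2$. Writing $r^2:=a_c^2+b_c^2$ for the squared director radius, I would then solve the $2\times2$ linear system in $x^2,y^2$ coming from $x^2/a^2+y^2/b^2=1$ and $x^2+y^2=r^2$. This yields $(x^\perp)^2=a^2(r^2-b^2)/(a^2-b^2)$ and $(y^\perp)^2=b^2(a^2-r^2)/(a^2-b^2)$, the four sign choices being forced by the common symmetry of both curves across the two axes.

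The remaining work is substitution and bookkeeping: inserting the caustic value $r^2=\bigl[(a^2+b^2)(a^4+b^4)-4a^2b^2\delta\bigr]/(a^2-b^2)^2$ into the two expressions and simplifying should reproduce \eqref{eqn:perp} verbatim, the denominator $c^3$ there being $(a^2-b^2)^{3/2}$. This reduction — carrying the surd $\delta$ through and collapsing the numerators to $a^4+3b^4-4b^2\delta$ and $-b^4-3a^4+4a^2\delta$ — is the main grind but is purely mechanical. Finally I would settle reality: the points exist iff $b^2\le r^2\le a^2$, equivalently iff the radicand $a^4+3b^4-4b^2\delta$ is nonnegative. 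Rationalizing this inequality with $u=(a/b)^2$ gives $(u^2+2u-7)(u-1)^2\ge0$, whose relevant positive root is $u=2\sqrt2-1=\alpha_4^2$; thus real right-triangle positions appear exactly when $a/b>\alpha_4$, in agreement with the obtuse-triangle threshold of \cite[Thm.~1]{reznik2020-ballet}. The one genuinely non-routine input is producing the caustic semi-axes in closed form; everything downstream is determined by them, so once $a_c,b_c$ are in hand the proposition follows.
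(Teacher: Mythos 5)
Your proposal is correct, but it takes a genuinely different route from the paper's proof. The paper argues computationally: it takes the explicit parametrization of $P_2,P_3$ in terms of $P_1=(x_1,y_1)$ from \cite{garcia2019-incenter}, imposes $\langle P_2-P_1,P_3-P_1\rangle=0$, and reduces this to the quartic $c^8x_1^4-2a^4c^2(a^4+3b^4)x_1^2+a^8(a^4+2a^2b^2-7b^4)=0$, of which $x^\perp$ is identified as the unique admissible root. You instead combine two classical facts --- each side of a billiard 3-periodic is tangent to the confocal caustic, and the two tangents from an external point to an ellipse are orthogonal precisely when that point lies on its director (orthoptic) circle $x^2+y^2=a_c^2+b_c^2$ --- so the problem collapses to intersecting the EB with a concentric circle. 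I checked your ingredients: the caustic semi-axes $a_c=a(\delta-b^2)/c^2$, $b_c=b(a^2-\delta)/c^2$ are the standard ones and are indeed confocal ($a_c^2-b_c^2=c^2$); your $r^2=\bigl((a^2+b^2)(a^4+b^4)-4a^2b^2\delta\bigr)/c^4$ is correct; and substituting into $x^2=a^2(r^2-b^2)/c^2$, $y^2=b^2(a^2-r^2)/c^2$ reproduces \eqref{eqn:perp} exactly, under the reading $c^2=a^2-b^2$ (which, as you rightly inferred, is what the paper must mean despite the slip ``$c=a^2-b^2$'' in Section~\ref{sec:cb}); your factorization $u^4-10u^2+16u-7=(u-1)^2(u^2+2u-7)$ and the root $u=2\sqrt{2}-1=\alpha_4^2$ are also correct. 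What your route buys: the fourfold symmetry, the ``if and only if'' character of the locus, and the meaning of the threshold $\alpha_4$ (the aspect ratio at which the director circle first reaches the EB, $r=b$) all fall out automatically, whereas the paper must separately sort out which quartic root lies in $(0,a)$. What it costs: you must import the caustic semi-axes in closed form from the literature. One small loose end: you assert existence is equivalent to nonnegativity of the single radicand $a^4+3b^4-4b^2\delta$ (i.e., $r\ge b$), but existence also needs $r^2\le a^2$, i.e., $4a^2\delta-3a^4-b^4\ge 0$; this is automatic for all $a>b$ --- with $u=(a/b)^2$ it reduces to $(u-1)^2(7u^2-2u-1)\ge 0$ --- but it should be stated, since otherwise the $y$-coordinate in \eqref{eqn:perp} is not known to be real.
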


\begin{proof}
Let the coordinates of the 3-periodic vertices be $P_1=(x_1,y_1),P_2=(x_2,y_2),P_3=(x_3,y_3)$   as derived in \cite{garcia2019-incenter}.

Computing the equation $\langle P_2-P_1,P_3-P_1\rangle=0$, after careful algebraic manipulations, it follows that $x_1$ satisfies the quartic equation
\[ c^8 x_1^4-2a^4c^2(a^4+3b^4)x_1^2+a^8(a^4+2a^2b^2-7b^4)=0.\] 
For $a/b>\sqrt{2\sqrt{2}-1} $ the only
  positive root in the interval $(0,a)$ is given by

\begin{equation}
x^{\perp}=\frac{a^2 \sqrt{a^4+3 b^4-4 b^2 \delta }}{c^3}.
\label{eqn:xperp}
\end{equation}

\noindent With $y^\perp$ obtainable from \eqref{eqn:billiard-f}.
\end{proof}

Equivalently, a 3-periodic will be obtuse iff one of its vertices lies on top or bottom halves of the EB between the $P_i^\perp$, see Figure~\ref{fig:rect_zones}.

\begin{figure}
    \centering
    \includegraphics[width=.66\textwidth]{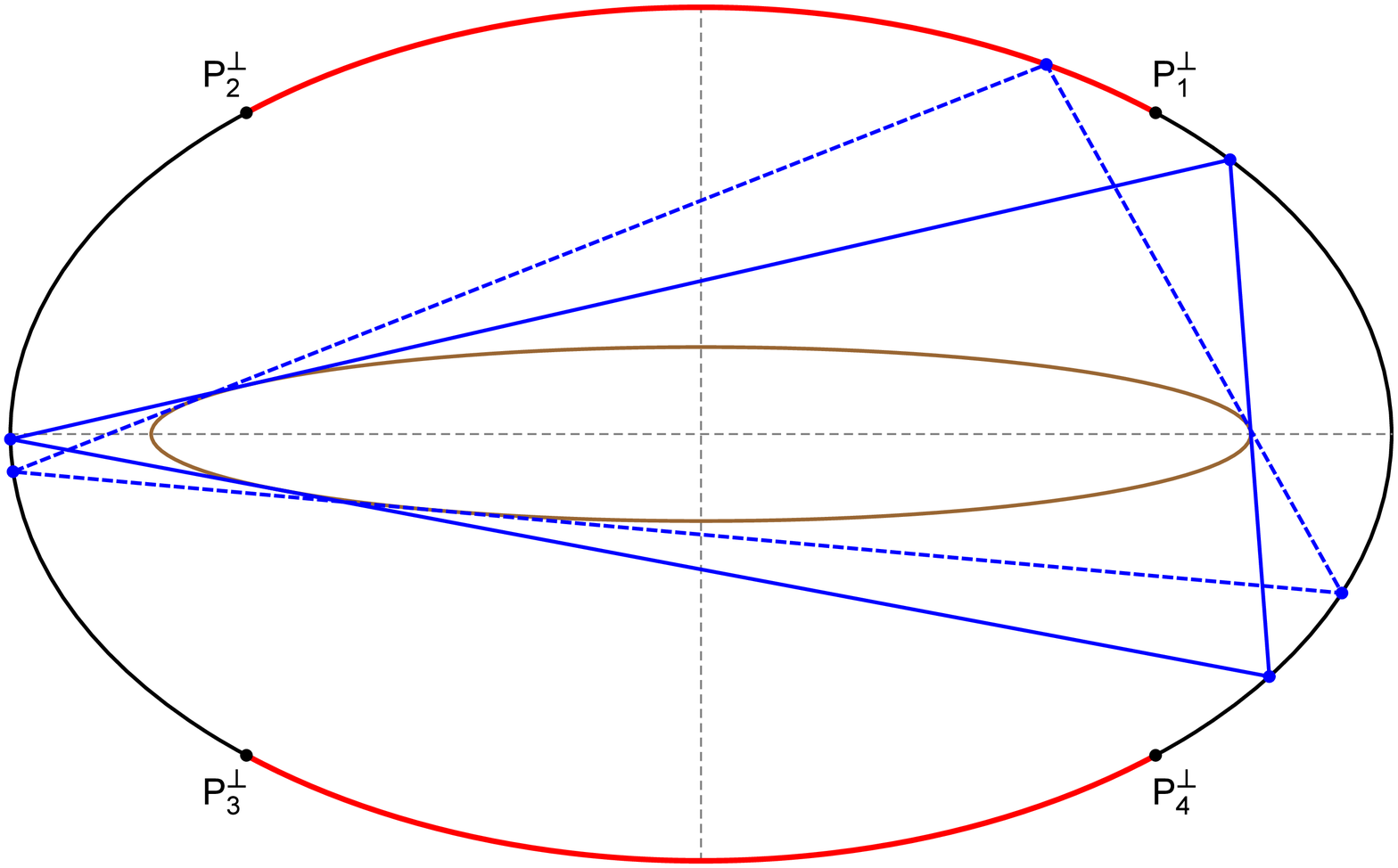}
    \caption{Two 3-periodics are shown: one acute (solid blue) and one obtuse (dashed blue) inscribed into an $a/b=1.618$ EB. Red arcs along the top and bottom halves of the EB indicate that when a 3-periodic vertex is there, the 3-periodic is obtuse. These only exist when $a/b>\alpha_4{\simeq}1.352$.}
    \label{fig:rect_zones}
\end{figure}

Consider the elliptic arc along the EB between $({\pm}x^\perp,y^\perp)$. When a vertex of the 3-periodic lies within  (resp. outside) this interval, the 3-periodic is obtuse (resp. acute).

\begin{proposition}
When $a/b>\alpha_4$, the locus of the center of the Orthic CB has four pieces: 2 for when the 3-periodic is acute (equal to the $X_6$ locus), and 2 when it is obtuse (equal to the locus of $X_6$ of $T''=P_2P_3X_4$.
\end{proposition}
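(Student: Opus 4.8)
The plan is to understand what the Orthic CB's center is as a triangle center of the reference 3-periodic, and to see how this identification changes as the 3-periodic passes between acute and obtuse configurations. The key fact is that the center of any triangle's Circumbilliard is its Mittenpunkt $X_9$ (Proposition~1), so the center of the Orthic CB is the Mittenpunkt of the Orthic triangle. First I would recall the classical relationship that for an \emph{acute} triangle, the Orthic triangle's own triangle centers relate to those of the reference triangle in a standard way; in particular it is well documented in \cite{etc} that the Mittenpunkt of the Orthic coincides with the Symmedian Point $X_6$ of the reference triangle. This immediately explains the ``acute'' pieces of the locus: whenever the 3-periodic is acute, the Orthic CB center traces exactly the $X_6$ locus of the family, which by \cite{garcia2020-ellipses} is the known convex quartic mentioned earlier in the paper.

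Next I would address the obtuse case, where the subtlety lies. When the reference 3-periodic is obtuse, the foot configuration of the orthic triangle degenerates: the orthocenter $X_4$ falls outside the triangle and one of the altitude feet lands on the extension of a side, so the orthic triangle is no longer the ``inner'' triangle formed by the three feet. The correct statement is that the orthic triangle of an obtuse triangle is congruent to (indeed is) the orthic triangle of an associated acute triangle obtained by replacing the obtuse vertex with the orthocenter. Concretely, if the obtuse vertex is $P_1$, then the triangle $T''=P_2P_3X_4$ (with $X_4$ the orthocenter of the original) is acute and shares the same orthic triangle. Hence the Mittenpunkt of the orthic equals $X_6$ of $T''$ rather than $X_6$ of the original 3-periodic, which accounts for the two ``obtuse'' pieces of the locus being the $X_6$ locus of $T''=P_2P_3X_4$.

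The plan for assembling the proof is therefore: (i) invoke Proposition~1 to identify the Orthic CB center as the Orthic's Mittenpunkt; (ii) use the acute/obtuse partition of the EB boundary from Proposition~\ref{prop:right-triangle} and its following remark, so that the four threshold points $P_i^\perp$ of \eqref{eqn:perp} mark exactly where the locus switches pieces, giving $2+2=4$ arcs; (iii) on each acute arc, cite the $X_6$-of-orthic $=X_6$-of-reference identity to get the $X_6$ locus; and (iv) on each obtuse arc, use the orthocentric-system symmetry to replace the obtuse vertex by $X_4$, yielding $X_6$ of $T''=P_2P_3X_4$. The count of four pieces then follows because as a single vertex sweeps the EB it crosses the obtuse arcs on the top and bottom halves, and by the symmetry of the threshold points $({\pm}x^\perp,{\pm}y^\perp)$ there are two obtuse excursions and two acute excursions per period.

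The main obstacle I expect is carefully justifying the obtuse-case identification uniformly across the family: one must verify that as the 3-periodic deforms, the \emph{same} vertex remains the obtuse one along each arc (so that $T''$ is well-defined and its vertex set varies continuously), and that the orthic-triangle equality $X_6(\text{orthic})=X_6(P_2P_3X_4)$ holds as an algebraic identity rather than merely for a generic configuration. Establishing this likely requires either an explicit orthocentric-system computation using the vertex parametrization of \cite{garcia2019-incenter}, or a symmetry argument showing the four $P_i^\perp$ are precisely the transition points where the obtuse vertex switches; pinning down which piece corresponds to which arc, and confirming the pieces are disjoint and together exhaust the locus, is the delicate bookkeeping step.
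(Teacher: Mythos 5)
Your proposal is correct and follows essentially the same route as the paper: identify the Orthic CB center as the Orthic's Mittenpunkt, use the known fact that for an acute reference triangle this equals $X_6$ of the reference, and for the obtuse arcs invoke the orthocentric-system fact that the obtuse 3-periodic and $T''=P_2P_3X_4$ share the same Orthic, with $T''$ acute (the paper justifies this last point by noting $T''$'s orthocenter is the original obtuse vertex, hence interior — the one small justification you assert rather than prove).
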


\begin{proof}
It is well-known that \cite{etc} an acute triangle $T$ has an Orthic whose vertices lie on the sidelines. Furthermore the Orthic's Mittenpunkt coincides with the Symmedian $X_6$ of $T$. Also known is the fact that:

\begin{remark}
Let triangle $T'=P_1P_2P_3$ be obtuse on $P_1$. Its Orthic has one vertex on $P_2P_3$ and  two others exterior to $T'$. Its Orthocenter $X_4$ is also exterior. Furthermore, the Orthic's Mittenpunkt is the Symmedian Point $X_6$ of acute triangle $T''=P_2P_3X_4$.
\end{remark}

To see this, notice the Orthic of $T''$ is also\footnote{The anti-orthic pre-images of $T'$ are both the 3-periodic and $T''$.} $T'$. $T''$ must be acute since its Orthocenter is $P_2$.
\end{proof}

The CB of the orthic is shown in Figures~\ref{fig:cb_ort} for four 3-periodic configurations in an EB whose $a/b>\alpha_4$.

\begin{proposition}

The coordinates $({\pm}x^*,{\pm}y^*)$ where the locus of the center of the Orthic's CB transitions from one curve to the other are given by:

\begin{align*}
x^*=&\frac{x^{\perp}}{c^6}\left(a^6+2 a^2 b^4- b^2 \delta(3 a^2  +b^2)    +b^6\right)\\
%\frac{a^2 \sqrt{a^4+3 b^4-4 b^2 \delta } \left(a^6+2 a^2 b^4-3 a^2 b^2 \delta +b^6-b^4 \delta \right)}{c^9} \\
%=\frac{a^2 x^{\perp}}{a^2+b^2} \\
y^* =&-\frac{y^{\perp}}{c^6}\left(b^6+2 a^4 b^2 -  a^2 \delta (3 b^2 +a^2)   \delta+a^6\right)
   %\frac{b^2 \sqrt{-3 a^4+4
 %  a^2 \delta -b^4} \left(-a^6-2 a^4 b^2+a^4 \delta +3 a^2 b^2 \delta -b^6\right)}{c^9}\\
  % =\frac{b^2y^{\perp}}{a^2+b^2}
\end{align*}

\end{proposition}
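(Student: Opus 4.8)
The plan is to identify the transition points with the right-triangle configurations of the 3-periodic and then to evaluate the center of the Orthic CB there in closed form. First I would invoke Proposition~\ref{prop:right-triangle}: as a vertex of the 3-periodic sweeps the EB, the family passes through a right triangle exactly when that vertex reaches one of the four symmetric points $(\pm x^\perp,\pm y^\perp)$. Since these are precisely the configurations separating the acute arcs from the obtuse arcs, the description of the previous proposition forces the center of the Orthic CB to switch from the $X_6$-locus (acute branch) to the $X_6(T'')$-locus (obtuse branch) exactly at these vertex positions. Hence it suffices to evaluate the center at a single right-triangle configuration, say with the right angle at $P_1=(x^\perp,y^\perp)$, and then recover the remaining three points from the sign symmetry.

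Next I would verify that the two constituent curves genuinely join at this point, which also pins down which center to compute. When $T=P_1P_2P_3$ is right-angled at $P_1$, its Orthocenter $X_4$ coincides with $P_1$, so the auxiliary triangle $T''=P_2P_3X_4$ degenerates to $P_2P_3P_1$, i.e.\ to the 3-periodic itself. Therefore $X_6(T'')=X_6(T)$ at the transition, the locus is continuous there, and the whole computation reduces to locating the Symmedian point $X_6$ of the right-angled 3-periodic.

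The computational core is to express $X_6$ in Cartesian coordinates. Writing $\ell_1,\ell_2,\ell_3$ for the sidelengths opposite $P_1,P_2,P_3$, the Symmedian point is the barycentric combination
\[ X_6=\frac{\ell_1^2 P_1+\ell_2^2 P_2+\ell_3^2 P_3}{\ell_1^2+\ell_2^2+\ell_3^2}. \]
I would set $P_1=(x^\perp,y^\perp)$, recover $P_2,P_3$ from the explicit 3-periodic vertex parametrization of \cite{garcia2019-incenter} specialized to this vertex position, form the three sidelengths, and substitute. The right angle at $P_1$ gives the Pythagorean relation $\ell_1^2=\ell_2^2+\ell_3^2$, which collapses several terms; together with $\delta^2=a^4-a^2b^2+b^4$ and $c=a^2-b^2$, and keeping $x^\perp,y^\perp$ as abbreviations, the $x$- and $y$-coordinates should factor as $x^\perp$ (resp.\ $-y^\perp$) times the displayed degree-six polynomials divided by $c^6$. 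The four sign choices $(\pm x^\perp,\pm y^\perp)$ then yield the four symmetric transition points $(\pm x^*,\pm y^*)$.

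The main obstacle is this final simplification. The raw vertex coordinates from \cite{garcia2019-incenter} carry nested radicals in $\delta$, so collapsing the weighted average to the clean factored form $x^\perp\,(a^6+2a^2b^4-b^2\delta(3a^2+b^2)+b^6)/c^6$ requires systematically reducing powers of $\delta$ via $\delta^2=a^4-a^2b^2+b^4$ and clearing the $c$-denominators without losing the common factor $x^\perp$. As in the earlier propositions, I expect this step to be tractable only with the assistance of a computer algebra system.
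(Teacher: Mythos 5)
Your proposal is correct and follows the same overall strategy as the paper: identify the transition points with the right-triangle configurations $(\pm x^\perp,\pm y^\perp)$ of Proposition~\ref{prop:right-triangle}, take $P_1=(x^\perp,y^\perp)$, pull $P_2,P_3$ from the explicit vertex parametrization of \cite{garcia2019-incenter}, and evaluate the Symmedian point there. The one substantive difference is the final computation: you use the general barycentric formula $X_6=\bigl(\ell_1^2 P_1+\ell_2^2 P_2+\ell_3^2 P_3\bigr)/\bigl(\ell_1^2+\ell_2^2+\ell_3^2\bigr)$ simplified with $\ell_1^2=\ell_2^2+\ell_3^2$, whereas the paper invokes the sharper fact that the Symmedian point of a right triangle is the midpoint of the altitude from the right-angle vertex, which reduces the CAS work to computing one altitude foot and taking a midpoint; both routes land on the same factored expressions. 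A welcome addition on your side is the explicit continuity argument --- that at a right angle $X_4=P_1$ makes $T''=P_2P_3X_4$ coincide with the 3-periodic, so the acute branch ($X_6$ locus) and obtuse branch ($X_6$ of $T''$) genuinely meet at these points --- a justification the paper leaves implicit, merely asserting the transition and computing the coordinates.
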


\begin{proof}
Let $P_1=(x_1,y_1)$ be the right-triangle vertex of a 3-periodic, given by $(x^\perp,y^\perp)$ as in \eqref{eqn:xperp}. Using \cite{garcia2019-incenter}, obtain $P_2=(p_{2x}/q_2, p_{2y}/q_2)$ and $P_3=(p_{3x}/q_3, p_{3y}/q_3)$, with:
 
 \begin{align*}
     p_{2x}= &b^4 c^2 x_1^3 -2 a^4 b^2 x_1^2 y_1+a^4 c^2 x_1 y_1^2-2 a^6 y_1^3\\
     p_{2y}=&2 b^6 x_1^3-b^4 c^2  x_1^2 y_1+2 a^2 b^4x_1 y_1^2-a^4 c^2 y_1^3\\
     q_2=&b^4 (a^2+b^2) x_1^2-2 a^2 b^2c^2 x_1 y_1 +a^4 (a^2+b^2) y_1^2\\
     p_{3x}=& b^4 c^2 x_1^3  +2 a^4 b^2 x_1^2 y_1+a^4 c^2 x_1 y_1^2+a^6 y_1^3\\
     p_{3y}=&-2 b^6 x_1^3 -b^4 c^2 x_1^2 y_1-2 a^2 b^4 x_1 y_1^2-a^4 c^2 y_1^3 \\
     q_3=&b^4 (a^2+b^2) x_1^2+2 a^2 b^2c^2 x_1 y_1+a^4 (a^2+b^2) y_1^2
 \end{align*}

It can be shown the Symmedian point $X_6$ of a right-triangle is the midpoint of its right-angle vertex altitude. Computing $X_6$ using this property leads to the result.
\end{proof}

Let $\alpha_{eq}=\sqrt{4 \sqrt{3}-3}\,{\simeq}\,1.982$ be the only positive root of $x^4 + 6 x^2 - 39$. It can be shown, see Figure~\ref{fig:cb_ort_equi}:

\begin{proposition}
At $a/b=\alpha_{eq}$, the locus of the Orthic CB is tangent to EB's top and bottom vertices. If a 3-periodic vertex is there, the Orthic is equilateral.
\end{proposition}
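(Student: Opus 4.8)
The plan is to reduce both claims to one symmetric orbit plus a single algebraic identity. I would start from the classical orthic angle formula: a triangle with angles $A,B,C$ has an orthic with angles $\pi-2A,\pi-2B,\pi-2C$ when acute, and $2A-\pi,2B,2C$ when obtuse at the vertex carrying $A$. A 3-periodic is never equilateral for $a>b$: by Proposition 1 its CB is the EB itself, whereas an equilateral triangle's CB is its circumcircle, which would force $a=b$. Hence the acute branch cannot yield an equilateral orthic, and in the obtuse branch $2A-\pi=2B=2C$ forces $(A,B,C)=(2\pi/3,\pi/6,\pi/6)$. So the whole statement concerns the isosceles 3-periodic whose obtuse apex lies at the top vertex $(0,b)$ of the EB.

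Next I would pin down that isosceles orbit directly from the reflection law rather than from general vertex formulas. With apex $P_1=(0,b)$ and base vertices $(\pm x_0,y_0)$, the bounce at $P_1$ is automatic by symmetry; requiring the chord $P_1\to(x_0,y_0)$ to reflect into the horizontal base and using $x_0^2/a^2+y_0^2/b^2=1$ yields $c\,y_0^2+2a^2 b\,y_0+b^4=0$, whose admissible root is $y_0=b(\delta-a^2)/c$ (with $\delta=\sqrt{a^4-a^2b^2+b^4}$), together with $x_0^2=a^2(b^2-y_0^2)/b^2$. The apex angle equals $2\pi/3$ exactly when the apex half-angle has tangent $x_0/(b-y_0)=\sqrt3$, i.e. $x_0^2=3(b-y_0)^2$; combining with $x_0^2=a^2(b-y_0)(b+y_0)/b^2$ and cancelling $b-y_0>0$ gives a second value $y_0=b(3b^2-a^2)/(a^2+3b^2)$. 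Equating the two expressions for $y_0$, clearing denominators and setting $r=a^2/b^2$, I expect the condition (after squaring to remove $\delta$) to reduce to $r(r-1)(r^2+6r-39)=0$; discarding the spurious root $r=0$ and the circular case $r=1$ leaves the positive root of $r^2+6r-39=0$, namely $r=4\sqrt3-3$, i.e. $a/b=\sqrt{4\sqrt3-3}=\alpha_{eq}$. This establishes the equilateral claim and, as a bonus, recovers the defining polynomial $x^4+6x^2-39$ of $\alpha_{eq}$.

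For the tangency I would locate the center of the orthic CB and then invoke symmetry. By Proposition 1 that center is the orthic's Mittenpunkt, which for the (now equilateral) orthic coincides with its centroid. Under $x_0^2=3(b-y_0)^2$ the altitude feet are easy: the foot from the apex is $(0,y_0)$, and the perpendicular-foot parameter for the other two collapses to $-1/2$, giving their common height $y_H=\tfrac32 b-\tfrac12 y_0$; hence the centroid ordinate is $(y_0+2y_H)/3=b$ and the center sits exactly at $(0,b)$. Finally, reflection of the family about the $y$-axis sends the orbit at parameter $t$ (with $P_1=(a\cos t,b\sin t)$) to the orbit at $\pi-t$, so the center $C(t)=(x_c(t),y_c(t))$ satisfies $x_c(\pi-t)=-x_c(t)$ and $y_c(\pi-t)=y_c(t)$; at $t=\pi/2$ these force $x_c=0$ and $y_c'=0$, so the locus passes through $(0,b)$ with a horizontal tangent, matching the EB's horizontal tangent there. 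The point $(0,-b)$ follows by the up-down symmetry.

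I expect the main obstacle to be the algebraic collapse in the second paragraph: making the two expressions for $y_0$ reduce cleanly to $r(r-1)(r^2+6r-39)$ — squaring out $\delta$ while keeping the root consistent with $\delta>b^2$ — is the step that actually produces $\alpha_{eq}$ and the one most prone to error. The remaining ingredients (the orthic angle formulas, Mittenpunkt $=$ centroid for an equilateral triangle, and the $y$-axis reflection symmetry) are routine once the symmetric orbit has been identified.
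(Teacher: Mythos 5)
Your proposal is correct, and it actually proves more than the paper does. Both arguments reduce the statement to the same symmetric configuration --- the isosceles 3-periodic with apex at $(0,b)$ --- and to the same condition, apex angle $2\pi/3$: the paper encodes it as $h/s_{eq}=\sqrt{3}/3$ (height over half-base), you as $x_0/(b-y_0)=\sqrt{3}$, which is the same equation. The routes to and from that condition differ, though. The paper derives the $120^{\circ}$ condition from the metric relation between an equilateral orthic and its excentral triangle (the 3-periodic being that excentral triangle), then substitutes closed-form expressions for the isosceles orbit's height and half-width quoted from \cite{garcia2019-incenter} and solves for $\alpha$. You instead (i) use the orthic angle formulas to rule out the acute branch (a 3-periodic cannot be equilateral when $a>b$) and force the obtuse isosceles with apex angle $2\pi/3$; (ii) re-derive the symmetric orbit directly from the reflection law --- your quadratic $c\,y_0^2+2a^2b\,y_0+b^4=0$ and admissible root $y_0=b(\delta-a^2)/c$ check out (the reflection condition yields a cubic factoring as $(y_0-b)(c\,y_0^2+2a^2b\,y_0+b^4)$), and your collapse to $r(r-1)(r^2+6r-39)=0$ is exactly right, recovering $\alpha_{eq}$; and (iii), crucially, you prove the tangency claim, which the paper's proof never addresses: showing the equilateral orthic's centroid (hence Mittenpunkt, hence CB center) sits exactly at $(0,b)$, with horizontal tangent of the locus there by the $y$-axis reflection symmetry. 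That last step fills a genuine gap in the published argument; the only point you might make explicit is that the symmetry argument needs the locus to be regular at $t=\pi/2$ (i.e., $x_c'(\pi/2)\neq 0$) for $y_c'=0$ to yield a horizontal tangent rather than a cusp.
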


\begin{proof}
Let $T$ be an equilateral with side $s_{eq}$ and center $C$. Let $h$ be the distance from any vertex of $T$ to $C$. It can be easily shown that $h/s_{eq}=\sqrt{3}/3$. Let $T'$ be the Excentral Triangle of $T$: its sides are $2s_{eq}$. Now consider the upside down equilateral in Figure~\ref{fig:cb_ort_equi}, which is the Orthic of an upright isosceles 3-periodic. $h$ is clearly the 3-periodic's height and $2s_{eq}$ is its base. The height and width of the upright isosceles are obtained from explicit expressions for the vertices \cite{garcia2019-incenter}:

\begin{align*}
    s_{eq}=\frac{\alpha^2}{\alpha^2-1}\sqrt{2\delta-\alpha^2-1},\;\;\;h=\frac{\alpha^2+\delta+1}{\alpha^2+\delta}
\end{align*}

\noindent where $\alpha=a/b$. Setting $h/s_{eq}=\sqrt{3}/3$ and solving for $\alpha$ yields the required result for $\alpha_{eq}$.
\end{proof}

\begin{figure}
    \centering
    \includegraphics[width=\textwidth]{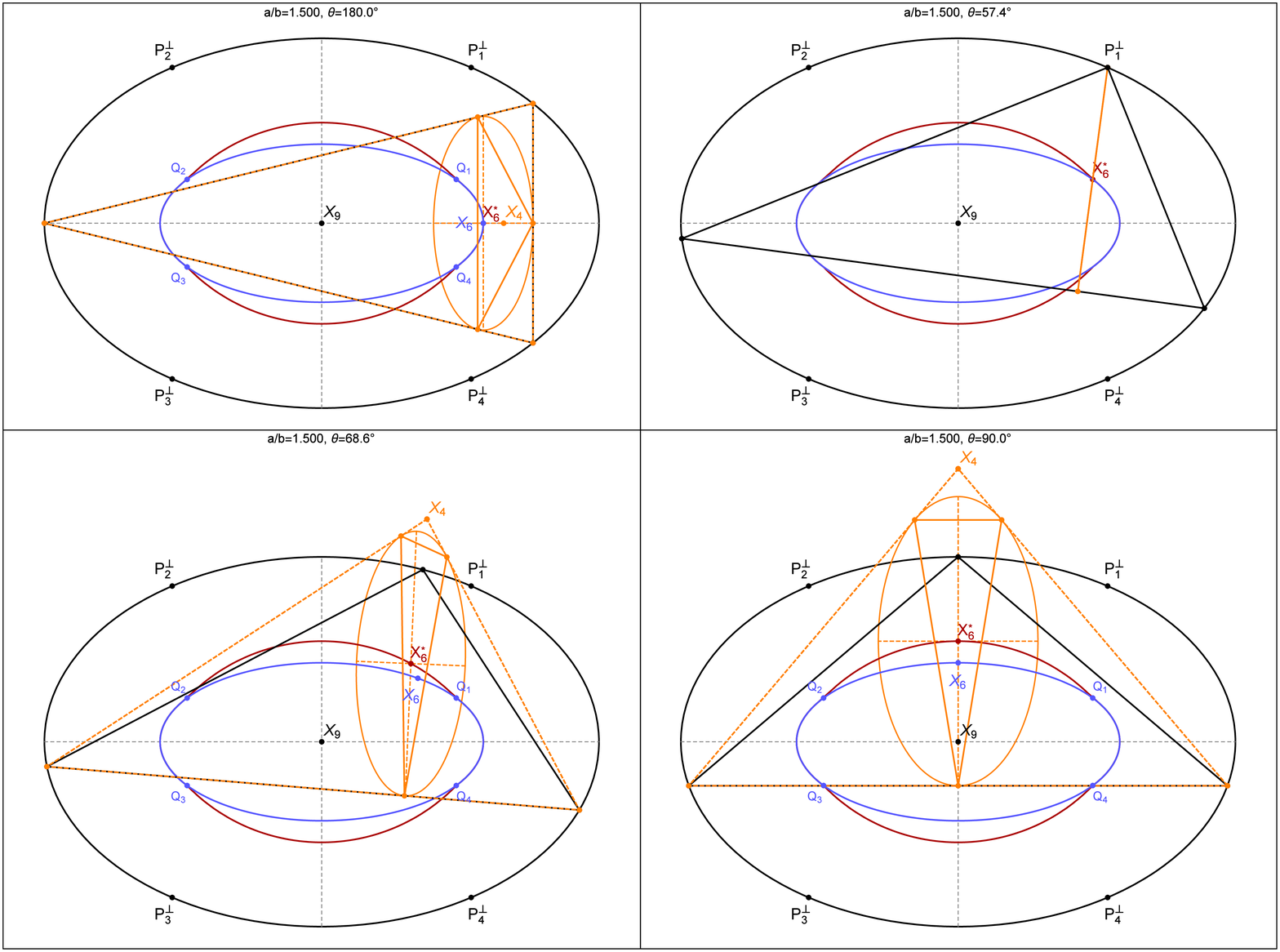}
    \caption{Orthic CB for an EB with $a/b=1.5>\alpha_4$, i.e., containing obtuse 3-periodics, which occur when a 3-periodic vertex lies on the top or bottom areas of the EB between the $P^\perp$. \textbf{Top left}: 3-periodic is sideways isosceles and acute (vertices outside $P^\perp$, so 3 orthic vertices lie on sidelines. The Orthic CB centers is simply the mittenpunkt of the Orthic, i.e, $X_6$ of the 3-periodic (blue curve: a convex quartic \cite{garcia2020-ellipses}). \textbf{Top right}: The position when a vertex is at a $P^\perp$ and the 3-periodic is a right triangle: its Orthic and CB degenerate to a segment. Here the CB center is at a first (of four) transition points shown in the other insets as $Q_i$, $i=1,2,3,4$. \textbf{Bottom left}: The 3-periodic is obtuse, the Orthic has two exterior vertices, and the center of the CB switches to the Symmedian of $T''=P_1P_2X_4$ (red portion of locus). \textbf{Bottom right:}. The 3-periodic is an upright isosceles, still obtuse, the center of the Orthic CB reaches its highest point along its locus (red). \textbf{Video}: \cite[PL\#06]{reznik2020-playlist-circum}.}
    \label{fig:cb_ort}
\end{figure}

\begin{figure}
    \centering
    \includegraphics[width=\textwidth]{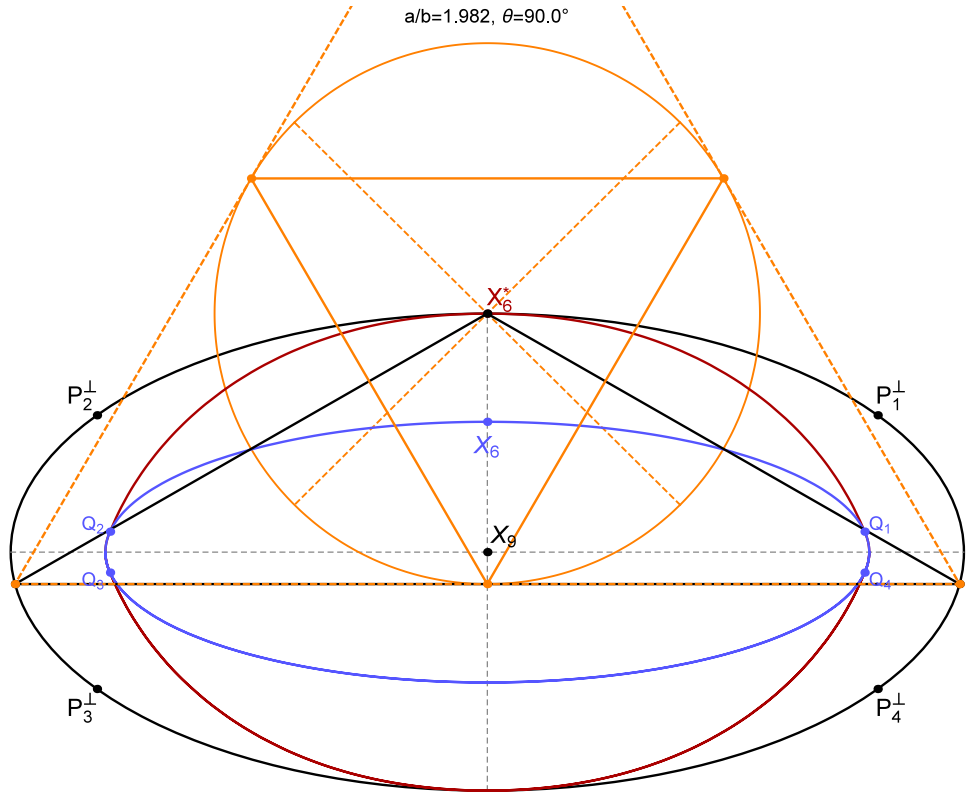}
    \caption{At $a/b=\alpha_{eq}{\simeq}1.982$, the Orthic is an equilateral triangle when a 3-periodic vertex lies on a top or bottom vertex of the EB. Therefore its CB is a circle.}
    \label{fig:cb_ort_equi}
\end{figure}

\subsection{Summary}

Table~\ref{tab:cb_summary} summarizes the CBs discussed above, their centers, and their loci.

\begin{table}[H]
\begin{tabular}{|c|c|c|}
\hline
Triangle & Center & Elliptic Locus \\
\hline
3-Periodic & $X_9$ & n/a \\
Excentral & $X_{168}$ & No\\
ACT & $X_{7}$ & Yes \\
Medial & $X_{142}$ & Yes \\
Orthic & $X_6^*$ & No \\
\hline
\end{tabular}
\caption{CBs mentioned in this Section, their Centers and loci types.}
\label{tab:cb_summary}
\end{table}

\subsection{Circumbilliard of the Poristic Family}

The Poristic Triangle Family is a set of triangles (blue) with fixed Incircle and Circumcircle \cite{gallatly1914-geometry}. It is a cousing of the 3-periodic family in that by definition its Inradius-to-Circumradius $r/R$ ratio is constant.

Weaver \cite{weaver1927-poristic} proved the Antiorthic Axis\footnote{The line passing through the intersections of reference and Excentral sidelines \cite{mw}.} of this family is stationary. Odehnal showed the locus of the Excenters is a circle centered on $X_{40}$ and of radius $2R$ \cite{odehnal2011-poristic}. He also showed that over the family, the locus of the Mittenpunkt $X_9$ is a circle whose radius is $2{d^2}(4R+r)$ and center is $X_1 + (X_1 - X_3) (2 R - r)/(4 R + r)$, where $d=|X_1X_3|=\sqrt{R(R-2{r})}$ \cite[page 17]{odehnal2011-poristic}.

Let $\rho=r/R$ and $a_9,b_9$ be the semi-axis lengths of the Circumbilliard a poristic triangle. As shown in Figure~\ref{fig:cb-poristic}:

\begin{theorem}
The ratio $a_9/b_9$ is invariant over the family and is given by:

\begin{equation*}
\frac{a_9}{b_9}=\sqrt{\frac{\rho^2+2 (\rho+1)\sqrt{1-2\rho} +2}{\rho (\rho+4)}}
\end{equation*}
\noindent where $\rho=r/R$.
\label{thm:poristic}
\end{theorem}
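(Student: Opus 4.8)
The plan is to reduce everything to the standard poristic setup, where the Incircle and Circumcircle are fixed while the triangle rotates through a one-parameter family, and then to compute the Circumbilliard's aspect ratio symbolically and show the rotation parameter drops out. First I would place the configuration in a convenient coordinate frame: put the Circumcircle and Incircle in their canonical Euler positions, so that $X_3$ (circumcenter) and $X_1$ (incenter) lie on a fixed axis at distance $d=\sqrt{R(R-2r)}$, as recorded in the excerpt. A poristic triangle is then parametrized by a single angle $\theta$ locating one vertex on the Circumcircle, with the other two determined by the tangency condition with the Incircle (Poncelet closure for $N=3$). I would write the three vertices $P_1(\theta),P_2(\theta),P_3(\theta)$ explicitly in terms of $R$, $r$, and $\theta$.

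Next I would invoke the first Proposition of the paper: every triangle $T$ has a unique Circumbilliard $E_9$, obtainable by the linear construction in Appendix~\ref{app:circum-linear} (the $5{\times}5$ system forcing \eqref{eqn:e0} through the three vertices and centered at the Mittenpunkt $X_9$). So I would feed $P_1(\theta),P_2(\theta),P_3(\theta)$ into that construction to obtain the coefficients $c_1,\dots,c_5$ of the conic as functions of $\theta,R,r$. From the conic coefficients, the semi-axes $a_9,b_9$ are recovered by the classical invariants of a conic: after translating to the center $X_9$, the squared semi-axes are the reciprocals of the eigenvalues of the quadratic-form matrix $\left[\begin{smallmatrix} c_4 & c_3/2\\ c_3/2 & c_5\end{smallmatrix}\right]$ (scaled by the constant term), so the aspect ratio is
\[
\frac{a_9}{b_9}=\sqrt{\frac{\lambda_{\min}}{\lambda_{\max}}},
\qquad
\lambda_{\pm}=\frac{c_4+c_5}{2}\pm\frac12\sqrt{(c_4-c_5)^2+c_3^2},
\]
with the ratio of eigenvalues expressible through the rotation-invariant quantities $\mathrm{tr}=c_4+c_5$ and $\det=c_4c_5-c_3^2/4$. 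The crucial point is that $a_9/b_9$ depends only on these two invariants, so I would compute $\mathrm{tr}$ and $\det$ symbolically and show that, after simplification using $d^2=R(R-2r)$, the $\theta$-dependence cancels, leaving a function of $\rho=r/R$ alone. Substituting $\lambda_{\min}/\lambda_{\max}=\big(\mathrm{tr}-\sqrt{\mathrm{tr}^2-4\det}\big)/\big(\mathrm{tr}+\sqrt{\mathrm{tr}^2-4\det}\big)$ and rationalizing should deliver the stated closed form $\sqrt{(\rho^2+2(\rho+1)\sqrt{1-2\rho}+2)/(\rho(\rho+4))}$; the appearance of $\sqrt{1-2\rho}=d/R$ is the telltale sign that the Euler relation is doing the work.

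The main obstacle I anticipate is the symbolic cancellation of $\theta$: the raw vertex coordinates and hence the conic coefficients are trigonometrically heavy, and it is not a priori obvious that $\mathrm{tr}$ and $\det$ (each individually $\theta$-dependent) combine into a $\theta$-free aspect ratio. I would manage this by working with the two conic invariants rather than the semi-axes directly, and by exploiting the known stationary data of the poristic family — Odehnal's result that the Mittenpunkt locus is a circle and Weaver's that the Antiorthic Axis is fixed — to anticipate which combinations must be invariant, thereby guiding the simplification. A cleaner alternative, if the brute-force trigonometry proves unwieldy, is to verify invariance by showing $\frac{d}{d\theta}(a_9/b_9)=0$ identically and then evaluating the constant at a single convenient configuration (e.g. the isosceles poristic triangle, where the vertices are symmetric about the Euler axis and the conic coefficients collapse); that reduces the hard part to a one-point evaluation once invariance is established.
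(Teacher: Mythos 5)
Your plan is workable in principle, but it takes a genuinely different --- and far heavier --- route than the paper. The paper's proof is essentially one line: since the Circumbilliard of a poristic triangle makes that triangle a billiard 3-periodic, the previously derived identity $\rho = r/R = 2(\delta - b^2)(a^2-\delta)/c^4$ (with $\delta=\sqrt{a^4-a^2b^2+b^4}$, $c^2=a^2-b^2$, applied here with $a=a_9$, $b=b_9$) ties $\rho$ to the aspect ratio $a_9/b_9$ alone, because that expression is homogeneous of degree zero in the semi-axes; and $\rho$ is constant over the poristic family by definition (the incircle and circumcircle are fixed), so $a_9/b_9$ must be constant, and solving that relation for $a/b$ in terms of $\rho$ yields the stated closed form. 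The invariance is thus inherited structurally from the trivial invariance of $r/R$, with no coordinates, no parametrization, and nothing to cancel. Your approach instead parametrizes the family by $\theta$, solves the $5\times 5$ system for the conic coefficients, and hopes the $\theta$-dependence cancels in $\mathrm{tr}$ and $\det$; this would succeed with enough symbolic-computation muscle (and your fallback --- prove $\frac{d}{d\theta}(a_9/b_9)=0$, then evaluate at the isosceles configuration --- is sound), but the invariance then appears as a computational accident rather than as a consequence of $r/R$ being fixed. The shortcut hiding inside your own setup is exactly the paper's observation: you never need the vertices, only the facts that every triangle of the family has the same $r/R$ and that $r/R$ of a 3-periodic is a function of its billiard's aspect ratio alone. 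One small slip to correct if you do carry out the computation: since the larger semi-axis corresponds to the \emph{smaller} eigenvalue of the centered quadratic form, the aspect ratio is $\sqrt{\lambda_{\max}/\lambda_{\min}}\ge 1$, not $\sqrt{\lambda_{\min}/\lambda_{\max}}$.
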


\begin{proof}
The following expression for $r/R$ has been derived for the 3-periodic family of an $a,b$ EB \cite[Equation 7]{garcia2020-new-properties}:

\begin{equation}
  \rho = \frac{r}{R} =  \frac{2(\delta-b^2)(a^2-\delta)}{c^4}
\end{equation}
Solving the above for $a/b$ yields the result.
\end{proof}

\begin{figure}
    \centering
    \includegraphics[width=\textwidth]{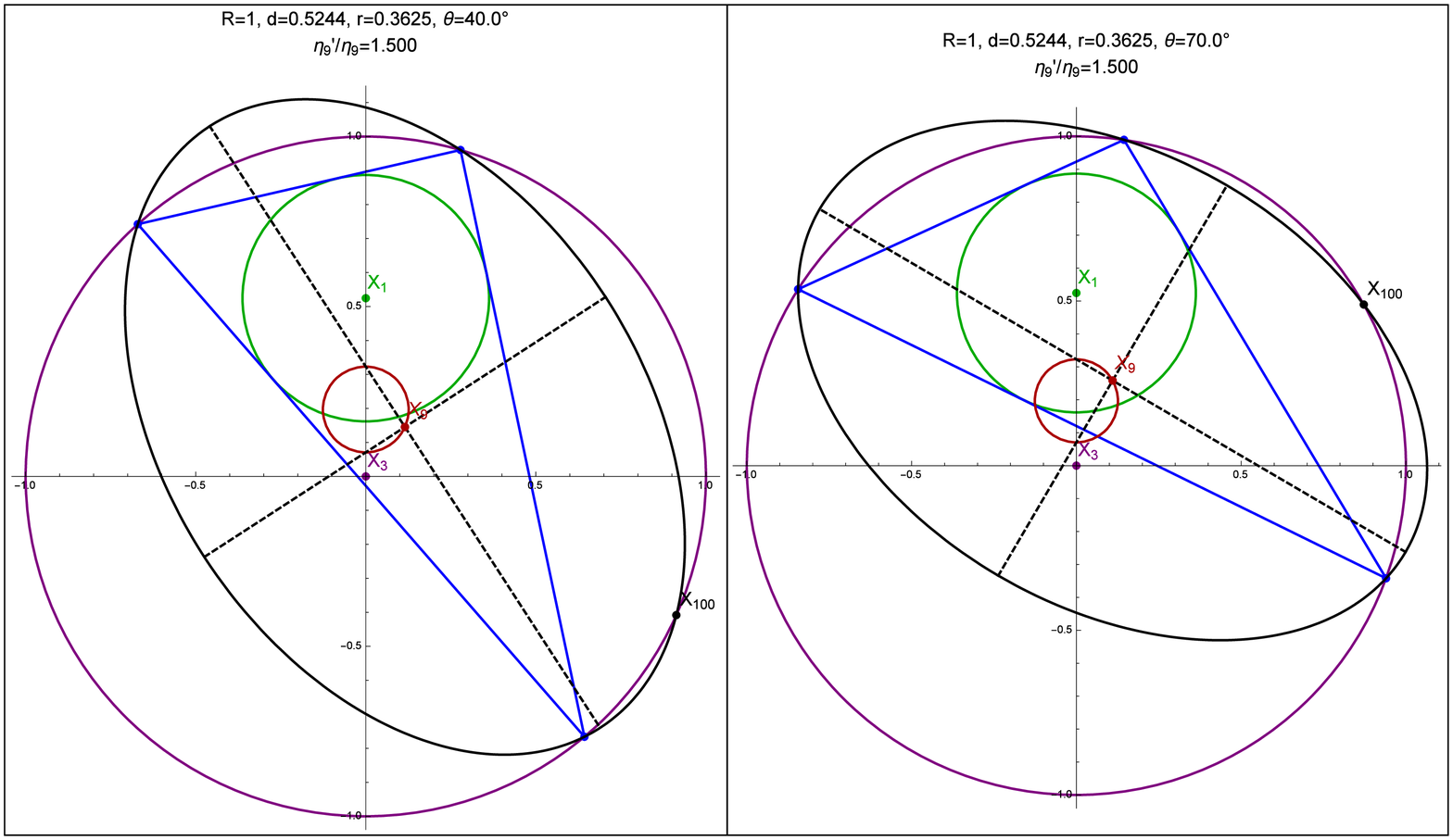}
    \caption{Two configurations (left and right) of the Poristic Triangle Family (blue), whose Incircle (green) and Circumcircle (purple) are fixed. Here $R=1,r=0.3625$. Over the family, the Circumbilliard (black) has invariant aspect ratio, in this case $a_9/b_9{\simeq}1.5$. Also shown is the circular locus of $X_9$ \cite[page 17]{odehnal2011-poristic}. \textbf{Video}: \cite[PL\#07]{reznik2020-playlist-circum}.}
    \label{fig:cb-poristic}
\end{figure}

\section{Conclusion}
\label{sec:conclusion}
Videos mentioned above have been placed on a \href{https://bit.ly/379mk1I}{playlist} \cite{reznik2020-playlist-circum}. Table~\ref{tab:playlist} contains quick-reference links to all videos mentioned, with column ``PL\#'' providing video number within the playlist.

\begin{table}
%\scriptsize
\begin{tabular}{c|l|l}
\href{https://bit.ly/2NOOIOX}{PL\#} & Title & Section\\
\hline

\href{https://youtu.be/tMrBqfRBYik}{01} &
{Mittenpunkt stationary at EB center} & \ref{sec:intro} \\

\href{https://youtu.be/vSCnorIJ2X8}{02} &
{Circumbilliards (CB) of Various Triangles} &
\ref{sec:cb} \\

\href{https://youtu.be/Og7xLgkrLqw}{03} &
{CBs of Derived Triangles and Loci of Centers} &
\ref{sec:cb_derived} \\

\href{https://youtu.be/xyHUwpvAj3g}{04} & CBs of ACT and Medial (separate) &
\ref{sec:cb_derived} \\

\href{https://youtu.be/e-mToZlkHtc}{05} &
CBs of ACT and Medial (superposed) & \ref{sec:cb_derived} \\

\href{https://youtu.be/5KL8st2vIb0}{06} &
{CB of Orthic and Locus of its Mittenpunkt} &
\ref{sec:cb_derived} \\

\href{https://youtu.be/yEu2aPiJwQo}{07} & \makecell[lt]{Invariant Aspect Ratio of \\Circumbilliard of Poristic Family} &
\ref{sec:cb_derived} \\

\end{tabular}
\caption{Videos mentioned in the paper. Column ``PL\#'' indicates the entry within the playlist \cite{reznik2020-playlist-circum}}
\label{tab:playlist}
\end{table}

\section*{Acknowledgments}
We would like to thank Peter Moses and Clark Kimberling, for their prompt help with dozens of questions. We would like to thank Boris Odehnal for his help with some proofs. A warm thanks goes out to Profs. Jair Koiller and Daniel Jaud who provided critical editorial help.

The second author is fellow of CNPq and coordinator of Project PRONEX/ CNPq/ FAPEG 2017 10 26 7000 508.

\bibliographystyle{spmpsci}
\bibliography{elliptic_billiards_v3,authors_rgk_v1} 
%\bibliography{test}

\appendix
%\section{Orbit Vertices}
%\label{app:orbit-vertices}
%\input{appendices/102_app_orbit_vertices}
\section{Computing a  Circumconic}
\label{app:circum-linear}
Let a Circumconic have center $M=(x_m,y_m)$ Equation~\eqref{eqn:e0} is subject to the following 5 constraints\footnote{If $M$ is set to $X_9$ one obtains the Circumbilliard.}: it must be satisfied for vertices $P_1,P_2,P_3$, and its gradient must vanish at $M$:

\begin{align*}
f(P_i)=&\;0,\;\;\;i=1,2,3\\
\frac{dg}{dx}(x_m,y_m)=&\;c_1+c_3 y_m+2c_4 x_m=0\\
\frac{dg}{dy}(x_m,y_m)=\;&c_2+c_3 x_m+2c_5 y_m=0
\end{align*}

Written as a linear system:

$$
\left[
\begin{array}{ccccc}
x_1&y_1&x_1 y_1&x_1^2&y_1^2\\
x_2&y_2&x_2 y_2&x_2^2&y_2^2\\
x_3&y_3&x_3 y_3&x_3^2&y_3^2\\
1&0&y_m&2\,x_m&0\\
0&1&x_m&0&2\,y_m
\end{array}
\right] .
\left[\begin{array}{c}c_1\\c_2\\c_3\\c_4\\c_5\end{array}\right] =
\left[\begin{array}{c}-1\\-1\\-1\\0\\0\end{array}\right]
$$

Given sidelenghts $s_1,s_2,s_3$, the coordinates of $X_9=(x_m,y_m)$ can be obtained by converting its Trilinears $\left(s_2 + s_3 - s_1 :: ...\right)$ to Cartesians \cite{etc}. 

Principal axes' directions are given by the eigenvectors of the Hessian matrix $H$ (the jacobian of the gradient), whose entries only depend on $c_3$, $c_4$, and $c_5$:

\begin{equation}
H = J(\nabla{g})=\left[\begin{array}{cc}2\,c_4&c_3\\c_3&2\,c_5\end{array}\right]
\label{eqn:hessian}
\end{equation}

The ratio of semiaxes' lengths is given by the square root of the ratio of $H$'s eigenvalues:

\begin{equation}
a/b=\sqrt{\lambda_2/\lambda_1}
\label{eqn:ratiolambda}
\end{equation}

Let $U=(x_u,y_u)$ be an eigenvector of $H$. The length of the semiaxis along $u$ is given by the distance $t$ which satisfies:

$$
g(M + t\,U) = 0
$$

This yields a two-parameter quadratic $d_0 + d_2 t^2$, where:

$$
\begin{array}{cll}
d_0 & = & 1 + c_1 x_m + c_4 x_m^2 + c_2 y_m + c_3 x_m y_m + c_5 y_m^2 \\ 
d_2 & = & c_4 x_u^2 + c_3 x_u y_u + c_5 y_u^2
\end{array}
$$

The length of the semi-axis associated with $U$ is then $t=\sqrt{-d_0/d_2}$. The other axis can be computed via \eqref{eqn:ratiolambda}.

The eigenvectors (axes of the conic) of $H$ are given by the  zeros of the quadratic form
\begin{align*}
   q(x,y)= c_3(y^2-x^2)+2(c_2-c_5)xy
\end{align*}

%\section{Circumellipses of Elementary Triangle}
%\label{app:circum-x1x2x9}
%\input{appendices/104_app_circumellipses_X1X2X9}

%\section{Circumellipses with Parallel Axes}
%\label{app:ce_parallel}
%\input{appendices/105_app_ce_parallel}

\section{Table of Symbols}
\label{app:symbols}
Tables~\ref{tab:kimberling} and \ref{tab:symbols} lists most Triangle Centers and symbols mentioned in the paper.

\begin{table}[H]
\scriptsize
\begin{tabular}{|c|l|l|}
\hline
Center & Meaning & Note\\
\hline
$X_1$ & Incenter & Locus is Ellipse \\
$X_2$ & Barycenter & Perspector of Steiner Circum/Inellipses \\
$X_3$ & Circumcenter & Locus is Ellipse, Perspector of $M$ \\
$X_4$ & Orthocenter & \makecell[tl]{Exterior to EB when 3-periodic is obtuse} \\
$X_5$ & Center of the 9-Point Circle & \\
$X_6$ & Symmedian Point & Locus is Quartic \cite{garcia2020-ellipses} \\
$X_6^*$ & $X_9$ of Orthic & Detached from $X_6$ locus for obtuse triangles \\
$X_7$ & Gergonne Point & Perspector of Incircle \\
$X_8$ & Nagel Point & Perspector of $I_9$, $X_1$ of ACT Incircle \\
$X_9$ & Mittenpunkt & Center of (Circum)billiard \\
$X_{10}$ & Spieker Point & Incenter of Medial \\
$X_{11}$ & Feuerbach Point & on confocal Caustic \\
$X_{40}$ & Bevan Point & $X_3$ of Excentral \\
$X_{69}$ & $X_6$ of the ACT & Perspector of $I_3$ \\
$X_{100}$ & Anticomplement of $X_{11}$ & On Circumcircle and EB, $J_{exc}$ center \\
$X_{142}$ & $X_9$ of Medial & Midpoint of $X_9{X_{7}}$, lies on $L(2,7)$ \\ 
$X_{144}$ & Anticomplement of $X_7$ &\makecell[tl]{Perspector of ACT and its Intouch Triangle} \\
$X_{168}$ & $X_9$ of the Excentral Triangle &  Non-elliptic Locus\\
\hline
$L(2,7)$ & ACT-Medial Mittenpunkt Axis & Line $\mathcal{L}_{663}$  \cite{etc_central_lines} \\
\hline
\end{tabular}
\caption{Kimberling Centers and Central Lines mentioned in the paper.}
\label{tab:kimberling}
\end{table}

\begin{table}
\scriptsize
\begin{tabular}{|c|l|l|}
\hline
Symbol & Meaning & Note\\
\hline
$a,b$ & EB semi-axes & $a>b>0$\\ 
$P_i,s_i$ & Vertices and sidelengths of 3-periodic & invariant $\sum{s_i}$ \\
$P_i'$ & Vertices of the Excentral Triangle & \\
%$a_c,b_c$ & Semi-axes of confocal Caustic & \\ 
$a_9,b_9$ & Semi-axes of Poristic Circumbilliard & \\
$r,R,\rho$ & Inradius, Circumradius, $r/R$ & $\rho$ is invariant \\
$\delta$ & Oft-used constant & $\sqrt{a^4-a^2 b^2+b^4}$ \\
$d$ & Distance $|X_1{X_3}|$ & $\sqrt{R(R-2r)}$\\
%\href{https://youtu.be/DS4ryndDK6Q}{15} & Invariants of the Poristic Triangle Family &
%\ref{sec:conclusion} \\
$\alpha$ & EB aspect ratio & $a/b$ \\
$\alpha_4$ & \makecell[tl]{$a/b$ threshold for obtuse 3-Periodics} & $ \sqrt{2\,\sqrt {2}-1}$\\
$\alpha_{eq}$ & $a/b$ for equilateral Orthic & $\sqrt{4\sqrt{3}-3}$\\
$P^\perp$ & Obtuse 3-periodic limits on EB & \\
$x^*,y^*$ & where $X_6^*$ detaches from $X_6$ locus & Occurs when some $P_i$ is at $P^\perp$\\
\hline
\end{tabular}
\caption{Symbols mentioned in the paper.}
\label{tab:symbols}
\end{table}

\end{document}